\documentclass[11pt]{article}
\usepackage{latexsym,amsmath,amssymb,mathrsfs,graphicx,hyperref,mathpazo,soul,color,amsthm}
\hypersetup{pdftex,colorlinks=true,linkcolor=blue,citecolor=blue,filecolor=blue,urlcolor=blue}

\usepackage{url}
\usepackage{xcolor}
\usepackage{soul}

\def\tr{{\raise0pt\hbox{$\scriptscriptstyle\top$}}}

\newtheorem{theorem}{Theorem}[section]
\newtheorem{lemma}[theorem]{Lemma}

\newtheorem{proposition}[theorem]{Proposition}

\newtheorem{problem}[theorem]{Problem}

\theoremstyle{remark}
\newtheorem*{remark}{Remark}
\theoremstyle{plain}

\title{\vspace{-1.5 cm}{\bf On the shortest open cubic equations}}

\author{
Bogdan Grechuk\footnote{School of Computing and Mathematical Sciences, University of Leicester, LE1 7RH, UK; bg83@leicester.ac.uk},
\quad
Ashleigh Ratcliffe\footnote{School of Computing and Mathematical Sciences, University of Leicester, LE1 7RH, UK; amw64@leicester.ac.uk}
}

\begin{document}
	
	\maketitle
	
\begin{abstract}
We use cubic reciprocity to prove that the equation $7x^3+2y^3=3z^2+1$ has no integer solutions. Prior to this work, it was the shortest cubic equation for which the existence of integer solutions remained open. We conclude with a list of the new shortest open cubic equations.
\end{abstract}

\section{Introduction}

The tenth problem in Hilbert's famous list \cite{MR1557926} asks whether there exists a procedure that can determine, for any multivariate polynomial $P$ with integer coefficients, whether the equation $P=0$ has an integer solution. In 1970, Matiyasevich \cite{MR258744}, building on earlier work of Davis, Putnam, and Robinson \cite{MR133227}, proved that no such general algorithm exists.

Moreover, this undecidability persists even when attention is restricted to equations of degree four, at least for positive integer solutions \cite{MR578379}. By contrast, all linear and quadratic equations can be handled effectively \cite{MR2055712}. For cubic equations, however, the problem remains open: while cubic equations in at most two variables can be decided algorithmically \cite{MR4731077}, for cubic equations in $n\geq 3$ variables there is neither a known algorithm for determining the existence of integer solutions nor a proof that no such algorithm can exist.

\begin{problem}\label{prob:cub}
	Given a cubic polynomial $P(x_1,\dots,x_n)$ with integer coefficients, determine whether the equation
	\begin{equation}\label{eq:gencub}
		P(x_1,\dots,x_n) = 0
	\end{equation}
	has an integer solution.
\end{problem}

Given the difficulty of Problem \ref{prob:cub} in general, we initiated a project to solve it for individual equations arranged in a certain order. Specifically, if a polynomial $P$, presented in reduced form (that is, no two monomials differ only by a constant factor), consists of monomials of degrees $d_1, \dots, d_k$ with coefficients $a_1, \dots, a_k$, then the \emph{length} of equation \eqref{eq:gencub} is defined by
\begin{equation}\label{eq:lengthdef}
	l(P) = \sum_{i=1}^k \log_2(|a_i|) + \sum_{i=1}^k d_i = \log_2 L(P),
	\quad \text{where} \quad L(P) = \prod_{i=1}^k \bigl(|a_i|\cdot 2^{d_i}\bigr).
\end{equation}
For example, the equation
\begin{equation}\label{eq:6x2zpz2yp3y3p1}
	6x^2z+z^2y+3y^3+1=0
\end{equation}
has length
$$
l = (\log_2 6 + 2 + 1) + (2 + 1) + (\log_2 3 + 3) + \log_2 1 = 10+2\log_2 3 \approx 13.2.
$$
Following \cite[Section~1.1.1]{MR4823864}, we regard two equations \eqref{eq:gencub} as equivalent if they can be transformed into each other by permuting or renaming the variables, substitutions of the form $x_i \to -x_i$, or multiplication by $-1$. Then for any given $B>0$, there are only finitely many (equivalence classes of) equations \eqref{eq:gencub} of length $l \leq B$. This allows us to order all equations of the form \eqref{eq:gencub} by length, and attempt to solve Problem \ref{prob:cub} in this order. The shortest cubic equations either have easily found solutions, such as $xyz+1=0$, or have no integer solutions for trivial reasons, such as $2xyz+1=0$; such elementary cases can be eliminated automatically by a computer program. Equation $x^2y+2y+1=0$ of length $l=5$ has solutions modulo every positive integer \cite[Proposition 7.2]{MR4823864}, but no integer solutions for the trivial reason that $-1<y=-(x^2+2)^{-1}<0$ for every integer $x$. A more interesting example is the equation $y^2=x^3-3$, which has length $2+3+\log_2 3 \approx 6.6$. This equation defines an elliptic curve, and the nonexistence of integer points can be proved by standard methods \cite[Proposition 7.4]{MR4823864}. More generally, Problem \ref{prob:cub} is algorithmically solvable for all two-variable cubic equations \cite{MR4731077}.

The shortest cubic equation in $n\geq 3$ variables that has solutions modulo every positive integer but no integer solutions is
$
x^2 y + y^2 + z^2 + 1 = 0,
$
which has length $l=7$. This equation can be handled using the law of quadratic reciprocity \cite[Section 4.1.7]{MR4823864}. Repeating this solve--exclude strategy leads naturally to more subtle examples. One such example is
$$
y^2-xyz+z^2 = x^3-5,
$$
of length $l=10+\log_2 5 \approx 12.3$, popularly known as the ``Fruit Diophantine equation.'' It was solved in \cite{majumdar2023107} and has since led to several related works \cite{vaishya2022class, prakash2024generalized, sahoo2024generalized}.

In \cite[Section~8.3]{MR4823864}, we reported the resolution of Problem \ref{prob:cub} for all cubic Diophantine equations of length $l \leq 10+2\log_2 3 \approx 13.2$, except for \eqref{eq:6x2zpz2yp3y3p1}. In fact, \cite{MR4823864} resolved Problem \ref{prob:cub} for all cubic equations of length $l < 12+\log_2 3 \approx 13.6$, except for \eqref{eq:6x2zpz2yp3y3p1} and the equations
\begin{equation}\label{eq:y2p10xyzpx3mxm2}
	y^2+10xyz+x^3-x-2=0
\end{equation}
\begin{equation}\label{eq:y2p7xyzp3x3m2}
	y^2+7xyz+3x^3-2=0
\end{equation}
and
\begin{equation}\label{eq:7x3p2y3m3z2m1}
	7x^3+2y^3=3z^2+1.
\end{equation}
Equation \eqref{eq:y2p10xyzpx3mxm2} has length $11+\log_2 5 \approx 13.3$, while equations \eqref{eq:y2p7xyzp3x3m2} and \eqref{eq:7x3p2y3m3z2m1} have length $l=9+\log_2 21 \approx 13.4$.

For equations \eqref{eq:6x2zpz2yp3y3p1}--\eqref{eq:y2p7xyzp3x3m2}, searches for small solutions returned no results. Heuristics nevertheless suggested that the number of integer solutions $(x,y,z)$ satisfying
\begin{equation}\label{eq:boundB}
\max\{|x|,|y|,|z|\}\leq B
\end{equation}
should grow like $c \log B$ for a suitable constant $c$, which motivates the search for larger solutions. And indeed, in 2024 large integer solutions were found for \eqref{eq:6x2zpz2yp3y3p1}--\eqref{eq:y2p7xyzp3x3m2}; see \cite{grechuk2026cubic} for the methods used to locate such solutions. Equation \eqref{eq:7x3p2y3m3z2m1} therefore became the shortest cubic equation for which the existence of integer solutions remained open. This equation is of the form $ax^3+by^3=cz^2+d$, and for such equations heuristics suggest that either the number of solutions satisfying \eqref{eq:boundB} grows polynomially in $B$, or no integer solutions exist at all. Since searches for small solutions returned no results, this provided strong evidence that equation \eqref{eq:7x3p2y3m3z2m1} has no integer solutions, but the proof had remained elusive until now. Recently, Alfaraj \cite{alfaraj2025brauer} showed how to use the Brauer--Manin obstruction to prove the non-existence of integer solutions to certain equations of similar form, including, for example, equation $9x^3+y^3=z^2+3$, but not equation \eqref{eq:7x3p2y3m3z2m1}.

Equation \eqref{eq:7x3p2y3m3z2m1} was listed as open in the monograph \cite{MR4823864}, in the collection of open problems \cite{grechuk2024systematic}, and on MathOverflow \url{https://mathoverflow.net/q/467988/}, where Denis Shatrov reduced the problem to a conjecture that, under certain conditions, $3z^2+1$ has a prime factor $p\neq 7$ such that $28$ is a cubic non-residue modulo $p$. 

In this paper, we prove Shatrov's conjecture by cubic reciprocity and thereby show that equation \eqref{eq:7x3p2y3m3z2m1} has no integer solutions. The resulting contradiction can also be interpreted as an integral Brauer--Manin obstruction, but we formulate the argument in terms of cubic reciprocity in order to keep the prerequisites minimal.

The paper is organized as follows. Section \ref{sec:review} reviews some standard facts from the theory of Eisenstein integers and cubic reciprocity that will be needed later. Section \ref{sec:main} proves that equation \eqref{eq:7x3p2y3m3z2m1} has no integer solutions. Section \ref{sec:next} lists the new shortest open cubic equations after equation \eqref{eq:7x3p2y3m3z2m1} is resolved.

\section{Eisenstein integers and cubic reciprocity}\label{sec:review}

In this section we review some standard facts about Eisenstein integers and cubic reciprocity that will be needed later.

Let $\omega = \frac{-1+\sqrt{-3}}{2}$. Then $\omega^2=-1-\omega$ and $\omega^3=1$. The ring $D=\mathbb{Z}[\omega]=\{a+b\omega \mid a,b \in \mathbb{Z}\}$ is the ring of Eisenstein integers. We use the standard divisibility notation $\beta \mid \alpha$ and $\alpha \equiv \gamma \pmod \beta$. The units of $D$ are $\pm1$, $\pm \omega$, and $\pm \omega^2$; two non-zero elements $\alpha$ and $\beta$ are \emph{associates} if $\alpha=\beta u$ for some unit $u$, and are \emph{coprime} if every common divisor is a unit.

The \emph{norm} of $\alpha \in D$ is defined by
\begin{equation}\label{def:norm}
	N(\alpha) = N(a+b\omega) := \alpha \bar{\alpha} = a^2-ab+b^2,
\end{equation}
where $\bar{\alpha}= a+b\omega^2=(a-b)-b\omega$ is the complex conjugate of $\alpha=a+b \omega$. We note that $N(\alpha)$ is a positive integer for every $\alpha\neq 0$, and that $N(\alpha \beta) = N(\alpha)N(\beta)$ for all $\alpha,\beta \in D$. In particular, $\beta \mid \alpha$ implies $N(\beta) \mid N(\alpha)$, and $N(u)=1$ for every unit $u$.

A nonzero nonunit $\pi \in D$ is \emph{prime} if $\pi \mid \alpha\beta$ implies $\pi \mid \alpha$ or $\pi \mid \beta$. Every non-zero element $\alpha \in D$ factors uniquely into primes up to associates \cite[Chapter 1]{modern}, and we denote by $v_\pi(\alpha)$ the total exponent of all associates of a prime $\pi$ in the prime factorization of $\alpha$, with the convention $v_\pi(0)=\infty$.

Let $\pi$ be a prime in $D$ such that $N(\pi)\neq 3$. Then $N(\pi)\equiv 1 \pmod 3$, and for any $\alpha \in D$, either $\pi \mid \alpha$, or there is a unique integer $m\in\{0,1,2\}$ such that $\alpha^{(N(\pi)-1)/3} \equiv \omega^m \pmod \pi$ \cite[Proposition 9.3.2]{modern}. The cubic residue character of $\alpha$ modulo $\pi$ is then defined by
\begin{equation}\label{def:cubicresidue}
	\left(\frac{\alpha}{\pi}\right)_3 =
	\begin{cases}
		0, & \text{if } \pi \mid \alpha, \\
		\gamma, & \text{where } \alpha^{(N(\pi)-1)/3} \equiv \gamma \pmod \pi,\ \gamma \in \{1,\omega,\omega^2\}.
	\end{cases}
\end{equation}

The following proposition summarizes the basic properties of the cubic residue character.
\begin{proposition}\cite[Proposition 9.3.3]{modern}\label{prop:cubicres}
	\begin{itemize}
		\item[(a)] For $\pi \nmid \alpha$, $\left(\frac{\alpha}{\pi}\right)_3 =1$ if and only if the congruence $x^3 \equiv \alpha \pmod \pi$ is solvable, that is, if and only if $\alpha$ is a cubic residue modulo $\pi$.
		\item[(b)] $\alpha^{(N(\pi)-1)/3} \equiv \left(\frac{\alpha}{\pi}\right)_3 \pmod \pi$.
		\item[(c)] $\left(\frac{\alpha\beta}{\pi}\right)_3 =\left(\frac{\alpha}{\pi}\right)_3 \left(\frac{\beta}{\pi}\right)_3$.
		\item[(d)] If $\alpha \equiv \beta \pmod \pi$, then $\left(\frac{\alpha}{\pi}\right)_3 = \left(\frac{\beta}{\pi}\right)_3$.
	\end{itemize}
\end{proposition}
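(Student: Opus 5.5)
Proposition \ref{prop:cubicres} collects standard properties that follow directly from the definition \eqref{def:cubicresidue}, once we know that $D/\pi D$ is a finite field with $N(\pi)$ elements. So the plan is to first record that fact and then derive (a)--(d) in the order (b), (d), (c), (a).

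\textbf{The residue field.} For a prime $\pi$ of $D$, the quotient $D/\pi D$ is a finite integral domain, hence a field. It has exactly $N(\pi)$ elements: if $\pi$ is an associate of a rational prime $q\equiv 2 \pmod 3$ the field has $q^2$ elements, and otherwise $N(\pi)=p$ is a rational prime and $\{0,\dots,p-1\}$ represents the residues. Consequently its multiplicative group is cyclic of order $N(\pi)-1$, and for $\pi\nmid\alpha$ we have Fermat's little theorem $\alpha^{N(\pi)-1}\equiv 1 \pmod \pi$. When $N(\pi)\ne 3$, the elements $1,\omega,\omega^2$ are distinct modulo $\pi$, because $\pi\mid 1-\omega$ would force $N(\pi)\mid N(1-\omega)=3$. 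Since $3\mid N(\pi)-1$, the cube roots of unity $1,\omega,\omega^2$ are exactly the three roots of $x^3-1$ in the field. The element $\alpha^{(N(\pi)-1)/3}$ cubes to $1$, so it is congruent to exactly one of them; this is the well-definedness already cited in the text.

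\textbf{Parts (b), (d), (c).} Part (b) is then immediate from the definition, including the case $\pi\mid\alpha$, where both sides are $0$ modulo $\pi$. For (d), if $\alpha\equiv\beta \pmod \pi$ then $\alpha^{(N(\pi)-1)/3}\equiv\beta^{(N(\pi)-1)/3} \pmod \pi$. Both sides reduce to cube roots of unity or to $0$, and these values are pairwise distinct modulo $\pi$, so the characters coincide. For (c), raise $\alpha\beta$ to the power $(N(\pi)-1)/3$ and apply (b) to each factor, giving $\left(\frac{\alpha\beta}{\pi}\right)_3\equiv\left(\frac{\alpha}{\pi}\right)_3\left(\frac{\beta}{\pi}\right)_3 \pmod \pi$. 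Both sides lie in $\{0,1,\omega,\omega^2\}$, whose elements are distinct modulo $\pi$, so the congruence is an equality.

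\textbf{Part (a), the main step.} One direction is easy: if $x^3\equiv\alpha \pmod \pi$ with $\pi\nmid\alpha$, then $\alpha^{(N(\pi)-1)/3}\equiv x^{N(\pi)-1}\equiv 1 \pmod \pi$. For the converse, let $g$ generate the cyclic group $(D/\pi D)^\times$ and write $\alpha\equiv g^k$. Then $\alpha^{(N(\pi)-1)/3}\equiv 1$ means $N(\pi)-1$ divides $k(N(\pi)-1)/3$, that is, $3\mid k$. Hence $\alpha\equiv (g^{k/3})^3$ is a cube. Equivalently, one can count: the cubing map on the multiplicative group has kernel of size $3$, so it has $(N(\pi)-1)/3$ image elements. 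All of them are roots of $x^{(N(\pi)-1)/3}-1$, which has at most that many roots in a field, so the cubes are exactly these roots. The only real input throughout is the cyclicity, or equivalently the root-count, of the finite field $D/\pi D$, and I expect establishing that field structure to be the main point; everything else is formal.
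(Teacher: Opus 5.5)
Your proof is correct and is essentially the standard argument of the cited source; the paper itself gives no proof and only cites \cite[Proposition 9.3.3]{modern}. That argument runs as follows: $D/\pi D$ is a field with $N(\pi)$ elements, (a) is the finite-field criterion for cubes obtained from the cyclicity of $(D/\pi D)^\times$, and (b)--(d) follow directly from the definition because $0,1,\omega,\omega^2$ are pairwise incongruent modulo $\pi$. For that last point, note that the differences $\omega-\omega^2=\omega(1-\omega)$ and $1-\omega^2=-\omega^2(1-\omega)$ are also associates of $1-\omega$, so your single norm computation does give full distinctness.
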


We say that an element $\gamma \in D$ is \emph{primary} if $\gamma \equiv 2 \pmod 3$. The law of cubic reciprocity for primary primes is formulated as follows \cite[Theorem 7.8]{lemmermeyer2000reciprocity}.

\begin{theorem}\label{th:cubicrecip}
	Let $\pi_1$ and $\pi_2$ be primary primes. Then
	$
	\left(\frac{\pi_2}{\pi_1}\right)_3 =  \left(\frac{\pi_1}{\pi_2}\right)_3.
	$
\end{theorem}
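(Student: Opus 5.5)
The plan follows the classical Gauss--Jacobi sum argument, as in Ireland--Rosen, specialised to $D=\mathbb{Z}[\omega]$. For a prime $\pi$ of $D$ with $N(\pi)=p\equiv 1\pmod 3$ a rational prime, write $\chi_\pi(\alpha)=\left(\frac{\alpha}{\pi}\right)_3$. Since $D/\pi\cong\mathbb{F}_p$, $\chi_\pi$ restricts to a cubic character of $\mathbb{F}_p^\times$. I would split into three cases according to whether $\pi_1,\pi_2$ are rational primes $q\equiv 2\pmod 3$ (these are exactly the primary rational primes, up to sign) or complex primary primes of prime norm.

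\emph{Case 1: both rational.} Let $\pi_1=-q_1$ and $\pi_2=-q_2$. Since $-1=(-1)^3$, Proposition~\ref{prop:cubicres}(c) reduces the claim to showing that $\chi_{q}(n)=1$ for every rational integer $n$ coprime to $q$. This holds because $n^{(q^2-1)/3}=(n^{q-1})^{(q+1)/3}\equiv 1\pmod q$ by Fermat, and Proposition~\ref{prop:cubicres}(b) then gives $\chi_q(n)=1$. So both sides equal $1$.

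\emph{Key tools for the remaining cases.} For a primary $\pi$ with $N(\pi)=p$, I would define the Gauss sum $g(\chi_\pi)=\sum_{t\in\mathbb{F}_p}\chi_\pi(t)\zeta_p^t$ and the Jacobi sum $J(\chi_\pi,\chi_\pi)$. I would then prove two identities.
\begin{itemize}
\item[(i)] $g(\chi_\pi)^3=pJ(\chi_\pi,\chi_\pi)$.
\item[(ii)] $J(\chi_\pi,\chi_\pi)=\pi$.
\end{itemize}
For (ii), the standard facts are $|J|^2=p$ and $J\equiv -1\equiv 2\pmod 3$; the second follows by pairing $t$ with $1-t$ and counting modulo $3$. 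These force $J$ to be a primary associate of $\pi$ or of $\bar\pi$. A congruence computation, summing $t^{(p-1)/3}(1-t)^{(p-1)/3}$ modulo $\pi$, then shows $J\equiv 0\pmod{\pi}$ rather than $\bar\pi$. Since primary associates are unique, $J=\pi$. I expect this identification of $J$ with $\pi$ itself, rather than with $\bar\pi$ or another associate, to be the delicate step.

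\emph{Case 2: $\pi_1=-q$ rational and $\pi_2=\pi$ complex.} Raise $g=g(\chi_\pi)$ to the power $q^2$ modulo $q$ in $D[\zeta_p]$. The Frobenius congruence gives $g^{q^2}\equiv\sum\chi_\pi(t)^{q^2}\zeta_p^{q^2t}$, and substituting $t\mapsto q^{-2}t$ yields $g^{q^2}\equiv\chi_\pi(q)^{-2}g=\chi_\pi(q)\,g$. On the other hand, by (i), (ii) and Euler's criterion modulo $q$ (Proposition~\ref{prop:cubicres}(b)),
$$g^{q^2-1}=(p\pi)^{(q^2-1)/3}\equiv\chi_q(p\pi)\pmod q.$$
Comparing the two expressions after multiplying by $\bar g$ gives $\chi_q(p\pi)=\chi_\pi(q)$. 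Note that $g\bar g=\pm p$ is a unit modulo $q$, which justifies the cancellation. Finally, $\chi_q(p)=1$ by Case 1, and signs are absorbed since $-1$ is a cube. This yields $\chi_q(\pi)=\chi_\pi(q)$.

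\emph{Case 3: both complex, with norms $p_1=N(\pi_1)$ and $p_2=N(\pi_2)$.} Run the same computation with $g=g(\chi_{\pi_1})$ modulo $\pi_2$, using the exponent $p_2-1$. This gives $\chi_{\pi_2}(p_1\pi_1)=\chi_{\pi_1}(p_2)^{-1}$. Writing $p_i=\pi_i\bar\pi_i$, repeat the argument with $\bar\pi_2$ in place of $\pi_2$, which is also primary. Combine the two relations with the identity $\chi_{\bar\pi}(\bar\alpha)=\overline{\chi_\pi(\alpha)}$ and with Case 2 applied to the rational primes dividing the norms. After multiplying the relations together, everything except $\chi_{\pi_2}(\pi_1)\chi_{\pi_1}(\pi_2)^{-1}$ cancels, so it equals $1$. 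Keeping track of which conjugate appears where is the main bookkeeping obstacle here. I would organise it by first recording the consequence $\chi_{\pi_1}(p_2)=\chi_{\pi_2}(\pi_1)\,\overline{\chi_{\pi_2}(\bar\pi_1)}$ and then substituting.
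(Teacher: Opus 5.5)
The paper does not prove this statement; it quotes it from Lemmermeyer's book. The relevant comparison is therefore with the standard Gauss--Jacobi sum proof, which is the route you follow. Case~1, the identities $g(\chi_\pi)^3=pJ(\chi_\pi,\chi_\pi)$ and $J(\chi_\pi,\chi_\pi)=\pi$, and Case~2 are fine. (The primary associate of an inert $q$ is $q$ itself, not $-q$, but that slip is harmless.)

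\textbf{Case~3 does not close as planned, and this is not just bookkeeping.} Put $A=\chi_{\pi_2}(\pi_1)$ and $B=\chi_{\pi_2}(\bar\pi_1)$. Your first relation reads $\chi_{\pi_1}(p_2)^{-1}=\chi_{\pi_2}(p_1\pi_1)=A^2B$. Your second one reduces the same Gauss sum modulo $\bar\pi_2$, and reads $\chi_{\pi_1}(p_2)^{-1}=\chi_{\bar\pi_2}(p_1\pi_1)=\overline{\chi_{\pi_2}(p_1\bar\pi_1)}=\bar A\,\bar B^2=A^2B$. It is literally the same equation.

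In fact, every relation obtained from $g(\chi_{\pi_1})$ or $g(\chi_{\bar\pi_1})$ modulo $\pi_2$ or $\bar\pi_2$ collapses to the single identity $\chi_{\pi_1}(\pi_2)\chi_{\pi_1}(\bar\pi_2)=A\bar B$, which is your recorded consequence. To isolate $\chi_{\pi_1}(\pi_2)$ from it you would need $\chi_{\pi_1}(\bar\pi_2)=\bar B=\chi_{\bar\pi_2}(\pi_1)$. That is reciprocity for the pair $(\pi_1,\bar\pi_2)$, so the argument is circular. ``Case~2 applied to the rational primes dividing the norms'' cannot supply it either: $p_1,p_2\equiv 1\pmod 3$ split in $D$, and Case~2 only treats inert primes.

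The missing idea is to swap the roles of the two primes. Reducing $g(\chi_{\pi_2})$ modulo $\pi_1$ gives $\chi_{\pi_1}(p_2\pi_2)=\chi_{\pi_2}(p_1)^{-1}$. Combined with your relation, this yields
\[
\chi_{\pi_2}(\pi_1)\,\chi_{\pi_2}(p_1)\,\chi_{\pi_1}(p_2)=1=\chi_{\pi_1}(\pi_2)\,\chi_{\pi_1}(p_2)\,\chi_{\pi_2}(p_1),
\]
and dividing gives the claim immediately, with no conjugates needed.

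\textbf{The Gauss sum argument also requires $p_1\neq p_2$, so one case of the statement is missing.} You must invert $g\bar g=p_1$ modulo $\pi_2$, and substitute $t\mapsto p_2^{-1}t$ in $\mathbb{F}_{p_1}$. Hence the pair $\pi_2=\bar\pi_1$ is not covered. The statement allows it, since $\bar\pi_1$ is primary whenever $\pi_1$ is; Ireland--Rosen's version explicitly assumes $N\pi_1\neq N\pi_2$.

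In this case $\bar\pi\equiv L\pmod\pi$ with $L=\pi+\bar\pi\in\mathbb{Z}$, and $\chi_{\bar\pi}(\pi)=\overline{\chi_\pi(\bar\pi)}$. So the claim is equivalent to $\chi_\pi(L)=1$, which needs its own argument. One way to get it:
\begin{itemize}
\item Write $\pi=a+b\omega$. Then $L=2a-b$ and $L^2+3b^2=4p$, so $p\nmid L$. Every prime factor of $L$ therefore has norm different from $p$.
\item The distinct-norm cases, applied multiplicatively, give $\chi_\pi(-L)=\prod\chi_\lambda(\pi)$, the product taken over the prime factors $\lambda$ of $-L$ with multiplicity.
\item This product is trivial on rational integers coprime to $L$. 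Inert factors contribute $1$, and split factors occur in conjugate pairs, each contributing $\chi_\lambda(n)\overline{\chi_\lambda(n)}=1$.
\item Since $\pi^2\equiv -p\pmod L$, the product evaluated at $\pi$ has square $1$. Being a cube root of unity, it equals $1$, which gives $\chi_\pi(L)=\chi_\pi(-L)=1$.
\end{itemize}
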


If $\pi \in D$ is coprime to $3$, then exactly one of its associates is primary \cite[Lemma 3.5.8]{wewers2014algebraische}. In other words, every element $\pi \in D$ coprime to $3$ has a unique \emph{primary associate}. This implies that every primary $\eta \in D$ can be written uniquely in the form\footnote{Note that after replacing each prime factor by its unique primary associate, the remaining unit can only be $\pm 1$, not $\pm \omega$ or $\pm \omega^2$. This is because both $\eta$ and each $\pi_i$ are congruent to a rational integer modulo $3$, and hence the same must be true of the remaining unit.}
$$
\eta=\pm \prod_i \pi_i^{e_i},
$$
where each $\pi_i$ is a primary prime and each $e_i$ is a non-negative integer exponent. This allows us to define, for every primary $\eta \in D$ and every $\alpha \in D$,
\begin{equation}\label{def:chi}
	\left(\frac{\alpha}{\eta}\right)_3 :=\prod_i \left(\frac{\alpha}{\pi_i}\right)_3^{e_i}.
\end{equation}
This symbol is multiplicative. The statement of Proposition \ref{prop:cubicres}(a) continues to hold in one direction: if $\alpha$ and $\eta$ are coprime, then the symbol is still guaranteed to be $1$ whenever $\alpha$ is a cubic residue modulo $\eta$, although the converse need not hold. The general law of cubic reciprocity for arbitrary primary elements is then formulated as follows\footnote{In \cite[Theorem 7.2.4]{classical}, Theorem \ref{th:gencubic} is stated under the additional assumption that $\eta$ and $\theta$ are coprime. If they are not coprime, however, the statement is trivially true because both $\left(\frac{\eta}{\theta}\right)_3$ and $\left(\frac{\theta}{\eta}\right)_3$ are equal to $0$.} \cite[Theorem 7.2.4]{classical}.

\begin{theorem}\label{th:gencubic}
	Let $\eta,\theta \in D$ be primary elements. Then
	$
	\left(\frac{\eta}{\theta}\right)_3 = \left(\frac{\theta}{\eta}\right)_3.
	$
\end{theorem}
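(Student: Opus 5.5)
The plan is to reduce Theorem~\ref{th:gencubic} to the prime case, Theorem~\ref{th:cubicrecip}, by expanding both symbols into double products over the primary prime factors of $\eta$ and $\theta$. As explained before \eqref{def:chi}, we can write uniquely
$$
\eta=\varepsilon\prod_i \pi_i^{e_i},\qquad \theta=\delta\prod_j \rho_j^{f_j},
$$
where $\varepsilon,\delta\in\{\pm1\}$, the $\pi_i$ and $\rho_j$ are primary primes, and $e_i,f_j\ge 0$. We will prove that both $\left(\frac{\eta}{\theta}\right)_3$ and $\left(\frac{\theta}{\eta}\right)_3$ equal $\prod_{i,j}\left(\frac{\pi_i}{\rho_j}\right)_3^{e_if_j}$, and then show this product is symmetric.

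The first step is to record that $\left(\frac{\pm1}{\pi}\right)_3=1$ for every prime $\pi$ with $N(\pi)\ne 3$. For $+1$ this is immediate. For $-1$ it follows from Proposition~\ref{prop:cubicres}(a), since $-1=(-1)^3$ is a cube; alternatively, $N(\pi)$ is odd and $\equiv1\pmod 3$, so $(N(\pi)-1)/3$ is even. Every primary prime $\rho_j$ is coprime to $3$, so this applies to each $\rho_j$.

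Next we expand the symbol. By definition \eqref{def:chi}, the symbol is multiplicative in the lower argument, so
$$
\left(\frac{\eta}{\theta}\right)_3=\prod_j\left(\frac{\eta}{\rho_j}\right)_3^{f_j}.
$$
The sign $\delta$ plays no role, because the definition ignores the unit $\pm1$. Proposition~\ref{prop:cubicres}(c) gives multiplicativity in the upper argument, and combining it with the first step yields
$$
\left(\frac{\eta}{\rho_j}\right)_3=\left(\frac{\varepsilon}{\rho_j}\right)_3\prod_i\left(\frac{\pi_i}{\rho_j}\right)_3^{e_i}=\prod_i\left(\frac{\pi_i}{\rho_j}\right)_3^{e_i}.
$$
Therefore
$$
\left(\frac{\eta}{\theta}\right)_3=\prod_{i,j}\left(\frac{\pi_i}{\rho_j}\right)_3^{e_if_j},
$$
and by the same argument
$$
\left(\frac{\theta}{\eta}\right)_3=\prod_{i,j}\left(\frac{\rho_j}{\pi_i}\right)_3^{e_if_j}.
$$
Degenerate cases such as $\eta=\pm1$ (empty products) are consistent with this, since both sides then equal $1$.

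It remains to match the factors pairwise. When $\pi_i$ and $\rho_j$ are not associates, Theorem~\ref{th:cubicrecip} gives $\left(\frac{\pi_i}{\rho_j}\right)_3=\left(\frac{\rho_j}{\pi_i}\right)_3$. When they are associates, uniqueness of the primary associate forces $\pi_i=\rho_j$, so both symbols are $0$ and again agree.

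The main point needing care is the sign $\pm1$ and the convention $0^0=1$ for primes with exponent zero. Once $\left(\frac{-1}{\pi}\right)_3=1$ is established, the rest is bookkeeping with the two multiplicativity properties.
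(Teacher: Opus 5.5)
Your proof is correct, but it takes a different route from the paper. The paper gives no argument of its own: it quotes the composite law from the literature, where it is stated for coprime $\eta,\theta$, and handles the non-coprime case in a footnote, since then both symbols contain a factor $0$.

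You instead derive the composite law from the prime law, Theorem~\ref{th:cubicrecip}. You expand both symbols into the double products $\prod_{i,j}\left(\frac{\pi_i}{\rho_j}\right)_3^{e_if_j}$ and $\prod_{i,j}\left(\frac{\rho_j}{\pi_i}\right)_3^{e_if_j}$, using multiplicativity in each argument together with $\left(\frac{-1}{\pi}\right)_3=1$. What this buys you:
\begin{itemize}
\item the general law becomes a formal consequence of the prime law;
\item it is transparent why the sign $\pm1$ in the factorization is harmless;
\item the non-coprime case needs no separate treatment, because a shared primary prime contributes a factor $0$ to both products.
\end{itemize}

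The cost is that you need the prime law for every pair of primary primes. This includes a complex-conjugate pair $\pi,\overline{\pi}$ of equal norm, which arises when $\pi\mid\eta$ and $\overline{\pi}\mid\theta$. Theorem~\ref{th:cubicrecip} as stated in the paper covers this case. However, some textbook versions of the prime law assume $N(\pi_1)\neq N(\pi_2)$; starting from such a version, your reduction would need an extra argument for conjugate pairs.

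One small slip: your alternative justification of $\left(\frac{-1}{\pi}\right)_3=1$ asserts that $N(\pi)$ is odd. This fails for the primary prime $\pi=2$, where $N(2)=4$ and $(-1)^{(4-1)/3}=-1$. The conclusion still holds there because $-1\equiv 1\pmod 2$, and your main justification via Proposition~\ref{prop:cubicres}(a) is fine.
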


\section{The resolution of the shortest open equation}\label{sec:main}

This section is devoted to the proof of the following theorem, which is the main result of the paper.

\begin{theorem}\label{th:main}
	Equation \eqref{eq:7x3p2y3m3z2m1} has no integer solutions.
\end{theorem}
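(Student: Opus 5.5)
The plan is to follow Shatrov's reduction and close it with Theorem~\ref{th:gencubic}. Since $\sqrt{-3}=1+2\omega$, set
$$
\alpha = 1+z\sqrt{-3} = (1+z)+2z\omega \in D,
$$
so that $N(\alpha)=3z^2+1$ and the equation becomes $7x^3+2y^3=\alpha\bar\alpha$.

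\textbf{Step 1: local information.} First I record what the equation forces locally.
\begin{itemize}
\item[(i)] Modulo $7$, the cubes are $0,\pm1$. Hence $3z^2+1\equiv 2y^3$ forces $y\equiv 0\pmod 7$ and $z^2\equiv 2\pmod 7$. So $7\mid 3z^2+1$, and exactly one of the two primes above $7$, call it $\pi_7$, divides $\alpha$ (they cannot both divide it, since $\gcd(\alpha,\bar\alpha)$ divides $2\sqrt{-3}$). A further analysis modulo $49$ and $343$ should pin down $v_7(3z^2+1)$ against $v_7(7x^3+2y^3)$.
\item[(ii)] Modulo $2$: if $z$ is odd, then $3z^2+1\equiv 4\pmod 8$, $\alpha=2\beta$ with $\beta$ odd, and $x$ must be even.
\item[(iii)] Modulo $9$: $\alpha\equiv 1+z\sqrt{-3}$ is already nearly primary, so I will fix the sign and unit so that $\pm\alpha$ (or $\pm\beta$) is primary.
\end{itemize}

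\textbf{Step 2: the cubic residue condition.} Let $\eta$ be the primary associate of the part of $\alpha$ coprime to $2\cdot 7$. For every primary prime $\pi\mid\eta$ with $\pi\nmid y$, the congruence $7x^3\equiv -2y^3\pmod\pi$ gives
$$
98=\left(\frac{-7x}{y}\right)^3\cdot\frac{98}{(-7x/y)^3}
$$
as a cube modulo $\pi$; equivalently $28=14^3/98$ is a cube modulo $\pi$. By Proposition~\ref{prop:cubicres}(a),(c) this gives $\left(\frac{28}{\pi}\right)_3=1$. Primes dividing both $x$ and $y$ contribute cubes: any such $p$ with $p\mid 3z^2+1$ has $p^{3}$-divisibility that lands entirely in $\alpha$ or in $\bar\alpha$, since these are coprime away from $2$. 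I will handle these by factoring out $d=\gcd(x,y)$ and checking that their contribution to the symbol is a cube. Multiplying over the factorization \eqref{def:chi} then yields
$$
\left(\frac{28}{\eta}\right)_3=1.
$$

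\textbf{Step 3: evaluate the symbol by reciprocity.} Write
$$
\left(\frac{28}{\eta}\right)_3=\left(\frac{-2}{\eta}\right)_3^{2}\left(\frac{\pi_7}{\eta}\right)_3\left(\frac{\bar\pi_7}{\eta}\right)_3,
$$
where $-2$, $\pi_7$ and $\bar\pi_7$ are primary. By Theorem~\ref{th:gencubic},
\begin{itemize}
\item[(a)] $\left(\frac{-2}{\eta}\right)_3=\left(\frac{\eta}{-2}\right)_3$, which depends only on $\eta \bmod 2$, hence on the parity of $z$ together with Step 1(ii);
\item[(b)] $\left(\frac{\bar\pi_7}{\eta}\right)_3=\left(\frac{\eta}{\bar\pi_7}\right)_3$, which depends only on $\eta\bmod\bar\pi_7$, that is, on $z\bmod 7$, where $z^2\equiv 2$ gives $z\equiv\pm3\pmod 7$;
\item[(c)] $\left(\frac{\pi_7}{\eta}\right)_3$ cannot be handled the same way, because $\pi_7$ divides $\alpha$.
\end{itemize}
For (c) I will instead use $\eta=\alpha/(\text{power of }\pi_7\cdot\text{unit}\cdot 2^k)$ and compute $\eta\bmod\pi_7$ from the $7$-adic expansion of $1+z\sqrt{-3}$ together with the equation modulo $7^2$ and $7^3$. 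Combining (a), (b) and (c) over all residue cases ($z$ even or odd, $z\equiv\pm3\pmod 7$) should give a value $\omega^{\pm1}\neq 1$, which contradicts Step 2.

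The main obstacle is exactly the $\pi_7$ bookkeeping. Because $7$ divides both $y$ and $\alpha$, Shatrov's condition ``$p\neq7$'' hides a nontrivial computation. One must pin down $v_{\pi_7}(\alpha)$ (it is $\geq 1$ and its residue modulo $3$ matters) and the unit part of $\alpha/\pi_7^{k}$ modulo $\pi_7$. I would first try to show $v_7(3z^2+1)=1$ exactly, using $7x^3+2y^3$ with $7\mid y$: this value is $\equiv 7x^3\pmod{7^3}$, so $v_7$ is $1$ when $7\nmid x$, and the case $7\mid x$ leads to $7^3\mid 3z^2+1$, which I expect can be descended. After that, the residue of $\alpha/\pi_7$ modulo $\pi_7$ is explicit in $z\bmod 49$, which can be matched against $x^3\bmod 7$ from the equation.
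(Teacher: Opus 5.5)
Your overall architecture matches the paper's. Step 2 is the paper's valuation lemma read contrapositively: after removing $d=\gcd(x,y)$, any prime $\pi\mid\eta$ dividing $7x'^3+2y'^3$ has $\left(\frac{28}{\pi}\right)_3=1$, so its exponent is irrelevant, and otherwise $v_\pi(\eta)=3v_\pi(d)$. (Your phrase ``primes dividing both $x$ and $y$ contribute cubes'' is not literally true, but this is what makes the step work.) Step 3 is then meant to produce a contradicting value of the symbol.

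\textbf{The gap is in Step 3.} It carries all the content, you have not carried it out, and the route you propose for the $\pi_7$ bookkeeping would fail. You plan to prove $v_7(3z^2+1)=1$ by ``descending'' the case $7\mid x$. There is no descent for this inhomogeneous equation, and no local argument at $7$ excludes that case. For $x=y=7$ one gets $7x^3+2y^3=9\cdot 7^3$, and $3z^2=9\cdot 7^3-1$ is solvable in $\mathbb Z_7$ by Hensel, since $(9\cdot 7^3-1)/3\equiv 2\equiv 3^2\pmod 7$. So $a:=v_7(3z^2+1)$ can be $\equiv 0\pmod 3$, and that case must actually be computed.

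Relatedly, your claim (a) is wrong as stated: $\left(\frac{\eta}{2}\right)_3$ does not depend only on the parity of $z$. Since $\eta\equiv u\,\alpha\,\pi_7^{-a}\pmod 2$, it depends on $a\bmod 3$ and on the unit $u$. Also, $z$ odd never occurs, because $7x^3+2y^3\not\equiv 4\pmod 8$.

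\textbf{The missing idea is to treat both classes of $a$ uniformly.}
\begin{itemize}
\item Write $x=7^ux_0$, $y=7^vy_0$ with $7\nmid x_0y_0$. Then $a=\min(1+3u,3v)$, so $a\equiv 0$ or $1\pmod 3$. Moreover $b:=(3z^2+1)/7^a$ satisfies $b\equiv x_0^3\equiv\pm1\pmod 7$ when $a\equiv 1$, and $b\equiv 2y_0^3\equiv\pm 2\pmod 7$ when $a\equiv 0$.
\item Take $z\equiv 3\pmod 7$, $\rho=2+3\omega$ (primary, with $\rho\mid\alpha$) and $\beta=\alpha/\rho^a$. Since $\rho\equiv\omega\pmod 2$ and $\rho\equiv 1\pmod{\bar\rho}$, you get $\beta\equiv\omega^{2a}\pmod 2$ and $\beta\equiv\alpha\equiv 2\pmod{\bar\rho}$.
\item Conjugating gives $\bar\beta\equiv 2\pmod\rho$, and then $\beta\bar\beta=b$ yields $\beta\equiv 4b\pmod\rho$. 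This is the clean substitute for the $7$-adic expansion you were looking for in (c); reciprocity itself applies there without difficulty.
\item The change in $\left(\frac{\beta}{2}\right)_3^2$ caused by $\rho^{-a}$ is exactly offset by the change in $\left(\frac{\beta}{\rho}\right)_3$ caused by $b\bmod 7$. Hence $\left(\frac{28}{\eta}\right)_3=\omega^2$ in both cases.
\item You need not track the unit in $\eta$, because $\left(\frac{u}{2}\right)_3^2\left(\frac{u}{\rho}\right)_3\left(\frac{u}{\bar\rho}\right)_3=1$ for every unit $u$.
\item Use $2$ rather than $-2$, which is not primary in the paper's normalization. This is harmless, since $-1$ is a cube.
\end{itemize}
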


We start by proving several lemmas.

\begin{lemma}\label{lem:ab}
Suppose that $(x,y,z)\in\mathbb Z^3$ satisfies \eqref{eq:7x3p2y3m3z2m1}. Then $3\nmid xyz$, $z$ is even, $z \equiv \pm 3 \pmod 7$, $7\mid y$, and 
\[
3z^2+1=7^a b,\qquad 7\nmid b,
\]
for some integers $a,b$ such that $a\geq 1$, $a\equiv 0$ or $1\pmod 3$, $b>1$, and
\[
a\equiv 1\pmod 3 \Longrightarrow b\equiv \pm 1\pmod 7,\qquad
a\equiv 0\pmod 3 \Longrightarrow b\equiv \pm 2\pmod 7.
\]
\end{lemma}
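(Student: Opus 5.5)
The plan is to derive each assertion from reductions of \eqref{eq:7x3p2y3m3z2m1} modulo $9$, $4$ and $7$, and then to compare $7$-adic valuations of the two terms on the left-hand side. Throughout we use that nonzero cubes modulo $9$ and modulo $7$ lie in $\{\pm1\}$.

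\emph{Modulo $9$.} Write $c_1\equiv x^3$ and $c_2\equiv y^3 \pmod 9$ with $c_1,c_2\in\{0,\pm1\}$.
\begin{itemize}
\item[(i)] A direct enumeration of the nine cases shows that $7c_1+2c_2 \pmod 9$ takes only the values $0,2,4,5,7$.
\item[(ii)] On the other hand, $3z^2+1\equiv 1 \pmod 9$ if $3\mid z$, and $3z^2+1\equiv 4\pmod 9$ otherwise.
\end{itemize}
Hence $3\nmid z$, and the value $4$ is attained only for $c_1=-1$, $c_2=1$. In particular $3\nmid xy$.

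\emph{Parity of $z$.} Modulo $2$ the equation gives $x\equiv z+1$. Suppose $z$ is odd. Then $3z^2+1\equiv 4\pmod 8$ and $x$ is even, so $7x^3\equiv 0\pmod 8$. This forces $2y^3\equiv 4\pmod 8$, i.e.\ $y^3\equiv 2\pmod 4$, which is impossible. So $z$ is even.

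\emph{Modulo $7$.} Here the left-hand side is $2y^3\in\{0,\pm2\}$. The right-hand side $3z^2+1$ takes the values $1,4,0,6$ according as $z^2\equiv 0,1,2,4$. The only common value is $0$. Therefore $7\mid y$ and $z^2\equiv 2$, i.e.\ $z\equiv\pm3\pmod 7$. This gives $3z^2+1=7^ab$ with $a\ge1$ and $7\nmid b$.

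\emph{Valuations.} Note that $x,y\neq0$, since $3\nmid xy$. Put $k=v_7(x)$ and $m=v_7(y)\ge1$. Then
\[
v_7(7x^3)=1+3k\equiv 1 \pmod 3, \qquad v_7(2y^3)=3m\equiv 0 \pmod 3.
\]
These are distinct, so $a=\min(1+3k,3m)$, which gives $a\equiv0$ or $1\pmod3$. Dividing by $7^a$ gives the residue of $b$:
\begin{itemize}
\item[(i)] if $a=1+3k$, then $b\equiv (x/7^k)^3\equiv\pm1\pmod 7$;
\item[(ii)] if $a=3m$, then $b\equiv 2(y/7^m)^3\equiv\pm2\pmod7$.
\end{itemize}
Since $b>0$, this yields the stated congruences.

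\emph{Excluding $b=1$.} This is the only step needing an extra idea. If $b=1$, then $b\equiv 1\pmod 7$, so we must be in the case $a\equiv1\pmod3$ and $3z^2+1=7^a$. Since $7$ has order $3$ modulo $9$, we get $7^a\equiv7\pmod 9$. But $3z^2+1\equiv4\pmod 9$ because $3\nmid z$, a contradiction. Hence $b>1$, completing the proof.
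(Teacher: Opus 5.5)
Your proof is correct and follows essentially the same route as the paper: reductions modulo $9$, $8$ and $7$, then a comparison of the $7$-adic valuations $1+3k$ and $3m$ of the two terms, which determines $a$ and the residue of $b$ modulo $7$. The only difference is cosmetic and comes in excluding $b=1$. You first use $b\equiv 1\pmod 7$ to force $a\equiv 1\pmod 3$ and then note $7^a\equiv 7\not\equiv 4\pmod 9$, whereas the paper observes directly that $7^a\equiv 4\pmod 9$ would force $a\equiv 2\pmod 3$.
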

\begin{proof}
	Reductions of \eqref{eq:7x3p2y3m3z2m1} modulo $9$ and $8$ show that $3\nmid xyz$ and $z$ is even, while reduction modulo  $7$ shows that $z \equiv \pm 3 \pmod 7$ and $7\mid y$. 
	Write
	\[
	x=7^u x_0,\qquad y=7^v y_0,\qquad 7\nmid x_0y_0.
	\]
	Since $7\mid y$, we have $v\geq 1$. Then
	\[
	7^a b=3z^2+1=7x^3+2y^3=7^{1+3u}x_0^3+7^{3v}(2y_0^3).
	\]
	The exponents $1+3u$ and $3v$ are distinct, so
	\[
	a=\min\{1+3u,3v\}.
	\]
	Hence $a\equiv 0$ or $1\pmod 3$. If $a=1+3u<3v$, then $b\equiv x_0^3\equiv \pm 1\pmod 7$; if $a=3v<1+3u$, then $b\equiv 2y_0^3\equiv \pm 2\pmod 7$. 
	Finally, if $b=1$, then $3z^2+1=7^a$. Because $3\nmid z$, we have $z^2\equiv 1,4$ or $7\pmod 9$, so $3z^2+1\equiv 4\pmod 9$. But the powers of $7$ modulo $9$ are periodic with cycle $7,4,1$, forcing $a\equiv 2\pmod 3$, a contradiction.
\end{proof}

\begin{lemma}\label{lem:symbol}
Suppose that $(x,y,z)\in\mathbb Z^3$ satisfies \eqref{eq:7x3p2y3m3z2m1}, that $z\equiv 3\pmod 7$, and let $a,b$ be as in Lemma \ref{lem:ab}. Set
\[
\alpha:=1+z+2z\omega,\qquad \rho:=2+3\omega,\qquad \beta:=\alpha/\rho^a.
\]
Then $\beta \in D$ is not a unit, $N(\beta)=b>1$, and $\beta$ is coprime to $2,3,\rho,\overline{\rho}$, and $\overline{\beta}$. 
\end{lemma}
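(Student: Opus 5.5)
The plan is to compute $N(\alpha)$ directly, use the splitting $7=\rho\overline{\rho}$ in $D$ to show that all powers of $7$ in $N(\alpha)$ come from $\rho$ alone, and then derive each coprimality claim from the facts in Lemma \ref{lem:ab} ($z$ even, $3\nmid z$, $7\nmid b$).

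\emph{Step 1: the norms.} Using \eqref{def:norm} with coefficients $1+z$ and $2z$,
\[
N(\alpha)=(1+z)^2-2z(1+z)+4z^2=3z^2+1=7^a b .
\]
Similarly $N(\rho)=4-6+9=7$, so $7=\rho\overline{\rho}$. Since $7$ is a norm, it is not prime in $D$, and $\rho$ and $\overline{\rho}$ are primes of norm $7$. They are not associates: otherwise $\rho\mid 7$ would force $\rho^2\mid 7$, giving $49\mid N(7)$ only if $\rho^2$ is an associate of $7$, which is excluded by the explicit check below.

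\emph{Step 2: $\rho$ divides $\alpha$ and $\overline{\rho}$ does not.} Modulo $\rho$ we have $3\omega\equiv -2$. Using $D/\rho\cong\mathbb F_7$, this gives $\omega\equiv 4 \pmod\rho$; this is consistent, since $16+4+1=21\equiv 0\pmod 7$. Hence
\[
\alpha\equiv 1+9z\equiv 1+2z \pmod \rho,
\]
and $z\equiv 3\pmod 7$ gives $\rho\mid\alpha$. On the other hand, $\alpha$ is not divisible by $7$, because $7\mid 1+z$ and $7\mid 2z$ cannot hold simultaneously. So $\rho$ and $\overline{\rho}$ do not both divide $\alpha$, and therefore $\overline{\rho}\nmid\alpha$. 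This also confirms that $\rho$ and $\overline{\rho}$ are non-associate.

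\emph{Step 3: $\beta\in D$ and $N(\beta)=b$.} Since $v_{\overline\rho}(\alpha)=v_\rho(\overline\alpha)$, we get
\[
a=v_\rho(\alpha\overline\alpha)=v_\rho(\alpha)+v_{\overline\rho}(\alpha)=v_\rho(\alpha).
\]
Hence $\beta=\alpha/\rho^a\in D$, it is coprime to both $\rho$ and $\overline\rho$, and $N(\beta)=7^ab/7^a=b>1$. In particular $\beta$ is not a unit.

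\emph{Step 4: coprimality with $2$ and $3$.} The element $2$ is prime in $D$ (it is inert, since $N(x)=2$ has no solutions), and $N(2)=4$. Since $z$ is even, $b\mid 3z^2+1$ is odd, so $2\nmid\beta$. The only prime above $3$ is $1-\omega$, which has norm $3$, and $3\nmid 3z^2+1$, so $(1-\omega)\nmid\beta$.

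\emph{Step 5: coprimality with $\overline\beta$.} This is the only step needing a small trick. Suppose a prime $\pi$ divides both $\beta$ and $\overline\beta$. Then $\pi$ divides $\alpha$ and $\overline\alpha$, and hence
\[
\pi \mid \alpha-\overline\alpha=2z(\omega-\omega^2)=2z\sqrt{-3}.
\]
Since $\pi\mid\beta$, Step 4 shows $\pi$ is coprime to $2$ and to $\sqrt{-3}$ (an associate of $1-\omega$), so $\pi\mid z$. But then $\pi\mid\alpha-z-2z\omega=1$, a contradiction.

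The main point requiring care is Step 2. It is the only place where the hypothesis $z\equiv 3\pmod 7$ enters: it selects $\rho$ rather than $\overline\rho$, and this is what makes all of $7^a$ divide $\alpha$ via $\rho^a$.
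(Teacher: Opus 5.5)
Your outline follows the paper's proof closely: the norm computation, $\omega\equiv 4\pmod\rho$ giving $\rho\mid\alpha$, $v_\rho(\alpha)=a$, and norm/parity arguments for coprimality with $2$ and $3$. Steps 4 and 5 are correct. However, you never actually prove that $\rho$ and $\overline{\rho}$ are non-associate, and your argument for it is circular.

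In Step 2 you infer $\overline{\rho}\nmid\alpha$ from $\rho\mid\alpha$ and $7\nmid\alpha$. That inference needs $\rho$ and $\overline{\rho}$ to be coprime. If $\overline{\rho}$ were an associate of $\rho$, then ``both divide $\alpha$'' would just say $\rho\mid\alpha$, and would not force $7=\rho\overline{\rho}\mid\alpha$. You then cite this same step as confirming non-associateness. The sentence in Step 1 does not help either, since $49\mid N(7)$ holds trivially. Step 3 also relies on this fact, because $a=v_\rho(7^ab)$ requires $v_\rho(7)=1$.

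The repair is one line, and it is exactly what the paper does. Reduce modulo $\overline{\rho}=-1-3\omega$ directly, where $\omega\equiv 2$, so $\alpha\equiv 1+5z\equiv 2\pmod{\overline{\rho}}$. This gives $\overline{\rho}\nmid\alpha$ without any appeal to $7\nmid\alpha$, and together with $\rho\mid\alpha$ it shows $\rho\not\sim\overline{\rho}$. Alternatively, $\rho^2=-5+3\omega$ is visibly not a unit multiple of $7$.

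Your Step 5 takes a different route from the paper. You use $\alpha-\overline{\alpha}=2z\sqrt{-3}$, which needs the coprimality with $2$ and $1-\omega$ from Step 4, plus the extra deduction $\pi\mid z\Rightarrow \pi\mid 1$. The paper uses $\alpha+\overline{\alpha}=2$ together with $\alpha\equiv 1\pmod 2$ to show $\alpha$ and $\overline{\alpha}$ are coprime. That is shorter and does not depend on the earlier coprimality results. Both arguments are valid.
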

\begin{proof}
	We have
	\[
	N(\alpha)=\alpha\overline{\alpha}=(1+z)^2-(1+z)(2z)+(2z)^2=1+3z^2.
	\]
	If a prime $\pi$ divides both $\alpha$ and $\overline{\alpha}$, then $\pi\mid (\alpha+\overline{\alpha})=2$. Since $2$ is prime in $D$ and $z$ is even, we have $\alpha\equiv 1\pmod 2$, so this is impossible. Thus $\alpha$ and $\overline{\alpha}$ are coprime. We have	
	\[
	\overline{\rho}=2+3\omega^2=-1-3\omega.
	\]
	Then $7=\rho\overline{\rho}$, and $\rho$ and $\overline{\rho}$ are, up to associates, the only primes dividing $7$.
	Because $z\equiv 3\pmod 7$, we have $z\equiv 3\pmod \rho$ and $z\equiv 3\pmod{\overline{\rho}}$. Modulo $\rho$ we have $\omega\equiv 4$, hence
	\[
	\alpha\equiv 1+z+8z\equiv 1+2z\equiv 0\pmod \rho,
	\]
	so $\rho\mid\alpha$. Modulo $\overline{\rho}$ we have $\omega\equiv 2$, and therefore
	\[
	\alpha\equiv 1+z+4z\equiv 1+5z\equiv 2\pmod{\overline{\rho}}.
	\]
	In particular, $\overline{\rho}\nmid\alpha$.
	Since $N(\alpha)=3z^2+1=7^a b$ and $7\nmid b$, it follows that $v_\rho(\alpha)=a$, hence $\beta:=\alpha/\rho^a$ is an element of $D$ satisfying $\rho \nmid \beta$ and $N(\beta)=b$. By Lemma \ref{lem:ab}, $b>1$, so $\beta$ is not a unit. Next, $\overline{\rho}\nmid\alpha$ implies $\overline{\rho}\nmid\beta$, and because $\alpha\equiv 1\pmod 2$ and $3\nmid N(\beta)=b$, it is also coprime to $2$ and $3$. Finally, $\beta$ and $\overline{\beta}$ are coprime because any common divisor would divide both $\alpha$ and $\overline{\alpha}$.
\end{proof}	
	
\begin{lemma}\label{lem:symbol2}
	Suppose that $(x,y,z)\in\mathbb Z^3$ satisfies \eqref{eq:7x3p2y3m3z2m1}, that $z\equiv 3\pmod 7$, let $a,b,\rho,\alpha,\beta$ be as in Lemma \ref{lem:symbol}, and let $\beta_0$ be the primary associate of $\beta$. Then
	\[
	\left(\frac{28}{\beta_0}\right)_3=\omega^2.
	\]
\end{lemma}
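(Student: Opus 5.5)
The plan is to factor $28$ into primary primes, apply Theorem \ref{th:gencubic} to each factor, and reduce everything to residues of $\beta$ modulo $2$, $\rho$ and $\overline{\rho}$. All three of $2$, $\rho=2+3\omega$ and $\overline{\rho}=-1-3\omega$ are $\equiv 2\pmod 3$, so they are primary primes and $28=2^2\rho\overline{\rho}$. By multiplicativity of \eqref{def:chi} and Theorem \ref{th:gencubic},
\[
\left(\frac{28}{\beta_0}\right)_3=\left(\frac{\beta_0}{2}\right)_3^2\left(\frac{\beta_0}{\rho}\right)_3\left(\frac{\beta_0}{\overline{\rho}}\right)_3 .
\]
Write $\beta_0=\pm\varepsilon\beta$ with $\varepsilon\in\{1,\omega,\omega^2\}$. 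The sign is harmless, since $-1=(-1)^3$ has character $1$.

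\textbf{Unit contributions.} These are computed from Proposition \ref{prop:cubicres}(b):
\begin{itemize}
\item modulo $2$ we have $N(2)=4$, so $(\varepsilon/2)_3=\varepsilon$;
\item modulo $\rho$ and $\overline{\rho}$ we have $N=7$, so $(\varepsilon/\rho)_3=(\varepsilon/\overline{\rho})_3=\varepsilon^2$.
\end{itemize}
The total unit contribution is therefore $\varepsilon^2\cdot\varepsilon^2\cdot\varepsilon^2=1$. So I never need to determine $\varepsilon$ explicitly (it depends on $z\bmod 3$), and it remains to evaluate the symbols of $\beta$ itself.

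\textbf{Residue of $\beta$ modulo $2$ and $\overline{\rho}$.} Modulo $2$, $\alpha\equiv 1$ since $z$ is even, and $\rho\equiv\omega$. Hence $\beta\equiv\omega^{-a}\pmod 2$, which gives $(\beta/2)_3^2=\omega^{-2a}$. Modulo $\overline{\rho}$, Lemma \ref{lem:symbol} gives $\alpha\equiv 2$. Also $\omega\equiv 2$ gives $\rho\equiv 8\equiv 1$, so $\beta\equiv 2$. Then $(\beta/\overline{\rho})_3\equiv 2^2=4\equiv\omega^2\pmod{\overline{\rho}}$.

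\textbf{Residue of $\beta$ modulo $\rho$ (the main obstacle).} Here $\rho^a\mid\alpha$ exactly, so $\beta\bmod\rho$ cannot be read off from $\alpha\bmod\rho$ without knowing $\alpha$ modulo $\rho^{a+1}$. I would avoid this by using the norm: $\beta\overline{\beta}=b$. Complex conjugation maps congruences modulo $\overline{\rho}$ to congruences modulo $\rho$, so $\overline{\beta}\equiv\overline{2}=2\pmod\rho$, and hence $\beta\equiv b/2\pmod\rho$ (in $D/\rho\cong\mathbb F_7$, where $\omega\equiv 4$). Thus $(\beta/\rho)_3\equiv(b/2)^2\pmod\rho$, and Lemma \ref{lem:ab} determines this in each case:
\begin{itemize}
\item if $a\equiv1\pmod 3$, then $b\equiv\pm1$, so $(b/2)^2\equiv 16\equiv 2\equiv\omega^2$;
\item if $a\equiv 0\pmod 3$, then $b\equiv\pm2$, so $(b/2)^2\equiv 1$.
\end{itemize}

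\textbf{Conclusion.} Multiplying the three pieces:
\begin{itemize}
\item for $a\equiv1\pmod 3$ the product is $\omega^{-2}\cdot\omega^2\cdot\omega^2=\omega^2$;
\item for $a\equiv0\pmod 3$ it is $1\cdot\omega^2\cdot 1=\omega^2$.
\end{itemize}
This proves the claim. The only delicate points are the conjugation trick modulo $\rho$ and carefully matching the residue representatives of $\omega$ modulo $\rho$ and modulo $\overline{\rho}$ ($\omega\equiv 4$ and $\omega\equiv 2$ respectively).
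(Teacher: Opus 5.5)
Your proof is correct and follows essentially the same route as the paper. Both arguments apply reciprocity against the primary factorization $28=2^2\rho\overline{\rho}$, check that the unit factor contributes $\varepsilon^6=1$, read off $\beta\equiv\omega^{-a}\pmod{2}$ and $\beta\equiv 2\pmod{\overline{\rho}}$, and use conjugation together with $\beta\overline{\beta}=b$ to get $\beta\equiv b/2\equiv 4b\pmod{\rho}$ before splitting on $a \bmod 3$; all of your residue computations check out.
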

\begin{proof}	
	Since $2$, $\rho$, and $\overline{\rho}$ are primary, cubic reciprocity gives
	\[
	\left(\frac{28}{\beta_0}\right)_3=\left(\frac{\beta_0}{2}\right)_3^2\left(\frac{\beta_0}{\rho}\right)_3\left(\frac{\beta_0}{\overline{\rho}}\right)_3.
	\]
	The unit group of $D$ is generated by $-\omega$. Since $-1$ is a cube and Proposition \ref{prop:cubicres}(b) gives
	\[
	\left(\frac{\omega}{2}\right)_3=\omega,
	\qquad
	\left(\frac{\omega}{\rho}\right)_3=\left(\frac{\omega}{\overline{\rho}}\right)_3=\omega^2,
	\]
	we have
	\[
	\left(\frac{-\omega}{2}\right)_3^2\left(\frac{-\omega}{\rho}\right)_3\left(\frac{-\omega}{\overline{\rho}}\right)_3=\omega^2\cdot\omega^2\cdot\omega^2=1.
	\]
	Therefore the right-hand side is unchanged if $\beta_0$ is replaced by any associate, and so
	\[
	\left(\frac{28}{\beta_0}\right)_3=\left(\frac{\beta}{2}\right)_3^2\left(\frac{\beta}{\rho}\right)_3\left(\frac{\beta}{\overline{\rho}}\right)_3.
	\]
	Because $\alpha\equiv 1\pmod 2$ and $\rho\equiv \omega\pmod 2$, we have
	\[
	\beta=\alpha\rho^{-a}\equiv \omega^{-a}=\omega^{2a}\pmod 2.
	\]
	Since $N(2)=4$, Proposition \ref{prop:cubicres}(b) implies that
	\[
	\left(\frac{\beta}{2}\right)_3=
	\begin{cases}
	1,& a\equiv 0\pmod 3,\\
	\omega^2,& a\equiv 1\pmod 3.
	\end{cases}
	\]
	Also $\rho\equiv 1\pmod{\overline{\rho}}$, so $\beta\equiv \alpha\equiv 2\pmod{\overline{\rho}}$. Because $2\equiv \omega\pmod{\overline{\rho}}$, Proposition \ref{prop:cubicres}(b) and (d) give
	\[
	\left(\frac{\beta}{\overline{\rho}}\right)_3=\left(\frac{2}{\overline{\rho}}\right)_3=\omega^2.
	\]
	Taking conjugates in $\beta\equiv 2\pmod{\overline{\rho}}$ yields $\overline{\beta}\equiv 2\pmod \rho$, and therefore
	\[
	b=N(\beta)=\beta\overline{\beta}\equiv 2\beta\pmod \rho,
	\qquad\text{so}\qquad
	\beta\equiv 4b\pmod \rho.
	\]
	If $a\equiv 1\pmod 3$, Lemma \ref{lem:ab} gives $b\equiv \pm 1\pmod 7$, so $\beta\equiv \pm 4\pmod \rho$ and
	\[
	\left(\frac{\beta}{\rho}\right)_3=\left(\frac{\pm 4}{\rho}\right)_3=\left(\frac{2}{\rho}\right)_3^2=\omega^2.
	\]
	If $a\equiv 0\pmod 3$, then $b\equiv \pm 2\pmod 7$, so $\beta\equiv \pm 8\equiv \pm 1\pmod \rho$ and
	\[
	\left(\frac{\beta}{\rho}\right)_3=1.
	\]
	In either case,
	\[
	\left(\frac{28}{\beta_0}\right)_3=\omega^2.
	\]
\end{proof}

\begin{lemma}\label{lem:valuation}
Let $p$ be a primary prime of $D$ such that $p\nmid 14$ and $\left(\frac{28}{p}\right)_3\neq 1$. Then
$
3 \mid v_p(7x^3+2y^3)
$
for all integers $x,y$ that are not both $0$.
\end{lemma}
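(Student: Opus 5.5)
Our approach is a local argument at the prime $p$: a cube root of $28$ modulo $p$ would be forced whenever the $p$-adic valuation of $7x^3+2y^3$ fails to be divisible by $3$.

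First, reduce to coprime $x,y$. Let $g=\gcd(x,y)$ and write $x=gx_1$, $y=gy_1$. Then $7x^3+2y^3=g^3(7x_1^3+2y_1^3)$ and $v_p(g^3)=3v_p(g)$. So it suffices to show $3\mid v_p(7x_1^3+2y_1^3)$ with $\gcd(x_1,y_1)=1$. If $7x_1^3+2y_1^3=0$, then $x_1$ and $y_1$ have the same parity and a parity or size check rules this out; more simply, $y_1^3/x_1^3=-7/2$ is not a rational cube. Hence the value is nonzero and the valuation is finite.

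Next, show that in the coprime case $v_p(7x^3+2y^3)=0$, which is divisible by $3$. Suppose $p\mid 7x^3+2y^3$. Then $p\nmid x$. Otherwise $p\mid 2y^3$, and since $p\nmid 2$ we get $p\mid y$. Then $p$ divides both rational integers $x$ and $y$, so the rational prime $N(p)$, or the rational prime below $p$, divides both $x$ and $y$. This contradicts coprimality, because $p\mid n$ for $n\in\mathbb Z$ implies that the rational prime below $p$ divides $n$. Similarly $p\nmid y$. Now work modulo $p$:
\[
7x^3\equiv -2y^3 \pmod p.
\]
Multiply by $4\cdot 7^2$ to get $(14x)^3\cdot 7\cdot 2\cdots$. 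A cleaner choice is to multiply $-2y^3\equiv 7x^3$ by $4$, which gives $-(2y)^3\equiv 28x^3$. Hence
\[
28\equiv \bigl(-2y\,x^{-1}\bigr)^3 \pmod p,
\]
where $x^{-1}$ is an inverse of $x$ modulo $p$. It exists since $p\nmid x$ and $D/pD$ is a field. So $28$ is a cubic residue modulo $p$, and $p\nmid 28$ because $p\nmid 14$. Proposition \ref{prop:cubicres}(a) then gives $\left(\frac{28}{p}\right)_3=1$, contradicting the hypothesis. Thus $p\nmid 7x_1^3+2y_1^3$, and the claim follows.

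The only delicate point is the coprimality step: $p$ is an Eisenstein prime while $x$ and $y$ are rational integers, so we must pass to the rational prime $q$ lying under $p$. If $p\mid n$ with $n\in\mathbb Z$, then $N(p)\mid n^2$, so $q\mid n$. Therefore $p\mid x$ and $p\mid y$ force $q\mid\gcd(x_1,y_1)=1$, which is impossible. Everything else is a direct application of Proposition \ref{prop:cubicres}(a).
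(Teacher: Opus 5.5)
Your proof is correct and is essentially the paper's argument: remove the common factor of $x$ and $y$, then show $p\nmid 7x_1^3+2y_1^3$, since otherwise $p\nmid x_1$ and $28\equiv(-2y_1x_1^{-1})^3\pmod p$, contradicting $\left(\frac{28}{p}\right)_3\neq 1$. The one difference is that you factor out the rational $\gcd(x,y)$, which needs the extra step through the rational prime below $p$ (a B\'ezout relation $sx_1+ty_1=1$ would give $p\mid 1$ faster); the paper instead factors out $p^m$ with $m=\min(v_p(x),v_p(y))$ and $x_1,y_1\in D$, so that step is not needed.
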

\begin{proof}
	If $p\nmid 7x^3+2y^3$, then $v_p(7x^3+2y^3)=0$. Otherwise let
	\[
	m:=\min(v_p(x),v_p(y)),\qquad x=p^m x_1,\qquad y=p^m y_1,
	\]
	where at least one of $x_1,y_1 \in D$ is not divisible by $p$. Then
	\[
	7x^3+2y^3=p^{3m}(7x_1^3+2y_1^3).
	\]
	We claim that $p\nmid 7x_1^3+2y_1^3$. Indeed, if $p\mid x_1$ and $p\mid 7x_1^3+2y_1^3$, then $p\mid 2y_1^3$, hence $p\mid y_1$ because $p\nmid 2$, contradicting the choice of $x_1$ and $y_1$. If $p\nmid x_1$ and $p\mid 7x_1^3+2y_1^3$, then
	\[
	28x_1^3\equiv (-2y_1)^3\pmod p,
	\]
	so $28$ is a cube modulo $p$, contrary to $\left(\frac{28}{p}\right)_3\neq 1$. Therefore $p\nmid 7x_1^3+2y_1^3$, and hence $v_p(7x^3+2y^3)=3m$.
\end{proof}

	Now we can prove Theorem \ref{th:main}. 
	
\begin{proof}[Proof of Theorem \ref{th:main}]
	Assume, for contradiction, that $(x,y,z)\in\mathbb Z^3$ satisfies \eqref{eq:7x3p2y3m3z2m1}. By Lemma \ref{lem:ab} we have that $z\equiv \pm 3\pmod 7$, so after replacing $z$ by $-z$ if necessary we may assume that $z\equiv 3\pmod 7$.
	Let $a,b,\beta$ and $\beta_0$ be as in Lemmas \ref{lem:ab}--\ref{lem:symbol2}. Since $\left(\frac{28}{\beta_0}\right)_3=\omega^2\neq 1$ by Lemma \ref{lem:symbol2}, definition \eqref{def:chi} shows that some primary prime $p$ appears in the factorization of $\beta_0$ with exponent $e\geq 1$ such that
	\[
	\left(\frac{28}{p}\right)_3^e\neq 1.
	\]
	Because $p\mid \beta_0$, Lemma \ref{lem:symbol} gives $p\nmid 14$. Moreover, $\beta$ and $\overline{\beta}$ are coprime and $N(\beta)=b$, hence $e=v_p(b)$.
	Now $p\mid b$ implies $p\mid 3z^2+1$, and therefore $p\mid 7x^3+2y^3$. Since $p\nmid 7$, we have
	\[
	e=v_p(b)=v_p(7x^3+2y^3).
	\]
	By Lemma \ref{lem:valuation}, $e$ is divisible by $3$. But then
	\[
	\left(\frac{28}{p}\right)_3^e=1,
	\]
	because $\left(\frac{28}{p}\right)_3$ is a cube root of unity. This contradicts the choice of $p$ and $e$, and completes the proof.
\end{proof}

\begin{remark}
	The contradiction obtained above may be interpreted as an integral Brauer--Manin obstruction. We nevertheless presented the proof in terms of cubic reciprocity, which keeps the prerequisites minimal.
\end{remark}

\section{The new shortest open equations}\label{sec:next}

Theorem \ref{th:main} completes the resolution of Problem \ref{prob:cub} for all cubic equations of length $l < 12+\log_2 3 \approx 13.6$. It is therefore natural to investigate the cubic equations of length $l = 12+\log_2 3$. We developed a computer program that generates all cubic equations of this length. We then eliminated (i) equations that have no integer solutions because of the ``elementary'' obstructions described in \cite{MR4823864}, and (ii) equations for which we were able to find an integer solution. This left six cubic equations of this length: the equation
\begin{equation}\label{eq:og}
	2+4x^3+3xy^2+z^3=0,
\end{equation}
and the five equations listed in Table \ref{tab:cubshortest}. We observed that the same cubic reciprocity method used for equation \eqref{eq:7x3p2y3m3z2m1} can also be used to resolve equation \eqref{eq:og}.

\begin{theorem}\label{th:og}
	Equation \eqref{eq:og} has no integer solutions.
\end{theorem}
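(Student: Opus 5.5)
The plan is to mimic the proof of Theorem~\ref{th:main}, using cubic reciprocity with respect to the prime $2$ in place of $\rho,\overline{\rho}$.

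\textbf{Step 1: rewrite the equation as a norm condition.} The starting point is the identity
\[
4x^3+3xy^2=\tfrac14\bigl((2x+y)^3+(2x-y)^3\bigr).
\]
Put $u=2x+y$ and $v=2x-y$, so that $u+v=4x$. Then \eqref{eq:og} becomes $(u+v)(u^2-uv+v^2)=-4(z^3+2)$, that is,
\[
z^3+2=-x\,(u^2-uv+v^2)=-x\,N(\gamma),\qquad \gamma:=u+v\omega\in D .
\]
Here $N(\gamma)=4x^2+3y^2$. So every rational prime $p\mid 4x^2+3y^2$ divides $z^3+2$.

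\textbf{Step 2: the local obstruction at a split prime.} Let $\pi$ be a primary prime with $N(\pi)=p\equiv 1\pmod 3$ and $\pi\nmid 2$. Since $D/\pi\cong\mathbb F_p$, the congruence $z^3\equiv -2\pmod p$ is solvable if and only if $\left(\frac{2}{\pi}\right)_3=1$, because $-1$ is a cube.

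Now let $q\equiv 2\pmod 3$ be an inert rational prime with $q\neq 2$. Every element of $\mathbb F_q$ is a cube, so $2\equiv c^3\pmod q$ for some rational integer $c$. Proposition~\ref{prop:cubicres}(a) then gives $\left(\frac{2}{q}\right)_3=1$.

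Hence, if $\gamma$ (with its powers of $2$ removed) has a primary associate $\gamma_0$ with $\left(\frac{2}{\gamma_0}\right)_3\neq 1$, then by definition \eqref{def:chi} some split prime $\pi\mid\gamma_0$ has $\left(\frac{2}{\pi}\right)_3\neq 1$. Then $p=N(\pi)$ divides $4x^2+3y^2$, hence divides $z^3+2$, while $z^3\equiv-2\pmod p$ is impossible. This is a contradiction.

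Note that, unlike Lemma~\ref{lem:valuation}, no valuation counting is needed here: a single offending prime suffices.

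\textbf{Step 3: evaluate the symbol by reciprocity.} By Theorem~\ref{th:gencubic}, $\left(\frac{2}{\gamma_0}\right)_3=\left(\frac{\gamma_0}{2}\right)_3$, and this depends only on $\gamma_0\bmod 2$. The associate-independence trick of Lemma~\ref{lem:symbol2} is not available here, because the unit $-\omega$ has $\left(\frac{-\omega}{2}\right)_3=\omega\neq1$. So the primary associate must be tracked explicitly.

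First, reduce \eqref{eq:og} modulo small moduli (modulo $2,4,8$, and also modulo $9$ or $7$) to pin down the parities of $x,y,z$ and the class of $y$ modulo $3$. The goal is to show that $y$ is odd. Then $u\equiv v\equiv 1\pmod 2$, so $\gamma\equiv 1+\omega=-\omega^2\pmod 2$ and $\gamma$ is coprime to $2$.

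Second, choose the unit $\epsilon$ with $\epsilon\gamma\equiv 2\pmod 3$; the value of $u+v\omega\bmod 3$ is determined by $x,y\bmod 3$. Then compute $\left(\frac{\epsilon\gamma}{2}\right)_3=(\epsilon\gamma)^{1}\bmod 2$ via Proposition~\ref{prop:cubicres}(b), using $N(2)=4$, and check that it is $\omega$ or $\omega^2$, not $1$.

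One must also ensure $3\nmid\gamma$, or divide out the power of $1-\omega$ before taking the primary associate. The congruence information from the reductions should control this, as in Lemma~\ref{lem:ab}.

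\textbf{Main obstacle.} I expect the hardest step to be this 2-adic and 3-adic bookkeeping: proving that $y$ is odd, handling any power of $1-\omega$ dividing $\gamma$, and verifying that in every residue class allowed by the congruences the primary associate gives $\left(\frac{\gamma_0}{2}\right)_3\neq1$.

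If some class yields $1$, I would first try replacing $\gamma$ by $\overline{\gamma}$, or $y$ by $-y$. These give symbols that are conjugate or shifted by a unit, and one of the choices should give a nontrivial value. Failing that, I would add an auxiliary prime dividing $x$ or $z^3+2$ to the reciprocity computation, analogously to the role of $\rho$ in Lemma~\ref{lem:symbol2}.
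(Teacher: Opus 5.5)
Your argument is the paper's proof. Your $\gamma=(2x+y)+(2x-y)\omega$ is just $-\omega^2\overline{\alpha}$ for the paper's $\alpha=2x+y+2y\omega$, and the contradiction between $\left(\frac{2}{\gamma_0}\right)_3=1$ (from $\gamma_0\mid z^3+2$) and $\left(\frac{\gamma_0}{2}\right)_3\neq 1$ via Theorem~\ref{th:gencubic} is exactly the paper's.

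The bookkeeping you flagged as the main obstacle is routine and needs none of your fallbacks. Reduction mod $4$ gives $y$ odd. Reduction mod $9$ gives $x\equiv 1\pmod 3$ and $3\nmid y$, so $3\nmid N(\gamma)=4x^2+3y^2$. Then $\gamma\equiv\omega$ or $1\pmod 3$ according as $y\equiv 1$ or $2\pmod 3$. Since $\gamma\equiv 1+\omega=-\omega^2\pmod 2$, the primary associate is $\gamma_0=-\omega^2\gamma\equiv\omega$ or $\gamma_0=-\gamma\equiv\omega^2\pmod 2$ respectively. Hence $\left(\frac{\gamma_0}{2}\right)_3$ equals $\omega$ or $\omega^2$, never $1$.
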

\begin{proof}
	Assume, for contradiction, that there exist integers \(x,y,z\) satisfying \eqref{eq:og}. A reduction modulo $4$ shows that $y$ is odd, while a reduction modulo $9$ shows that $x\equiv 1\pmod 3$, $3\nmid y$, and $3\mid z$. By replacing $y$ with $-y$ if necessary, we may assume that $y\equiv 1\pmod 3$.
	
	Set
	\[
	\alpha:=2x+y+2y\omega \in D.
	\]
	Then
	\[
	N(\alpha)
	=(2x+y)^2-(2x+y)(2y)+(2y)^2
	=4x^2+3y^2,
	\]
	and from \eqref{eq:og} we obtain
	\[
	z^3+2=-x(4x^2+3y^2)=-x\,N(\alpha)=-x\,\alpha\overline{\alpha}.
	\]
	Hence,
	$
	\alpha\mid z^3+2.
	$
	
	Next, $x \equiv y \equiv 1 \pmod 3$ implies that $\alpha\equiv 2\omega \pmod 3$.
	Define
	\[
	\beta:=\omega^2 \alpha.
	\]
	Then \(\beta\equiv 2\pmod 3\), so \(\beta\) is primary. Also, because $y$ is odd,
	$\alpha \equiv 1\pmod 2$, and
	$\beta=\omega^2 \alpha\equiv \omega^2 \pmod 2$, so $\beta$ is coprime to $2$.
	
	Because $\alpha\mid z^3+2$, and $\omega^2$ is a unit, we also have
	$
	\beta\mid z^3+2,
	$
	hence
	$
	(-z)^3\equiv 2\pmod{\beta},
	$
	which implies that
	\begin{equation}\label{betasymbol}
		\left(\frac{2}{\beta}\right)_3 = 1.
	\end{equation}
	
	On the other hand,
	$$
	\left(\frac{\beta}{2}\right)_3\equiv \beta^{(N(2)-1)/3} = \beta^{(4-1)/3}=\beta \equiv \omega^2 \pmod 2,
	$$
	which implies that
	$
	\left(\frac{\beta}{2}\right)_3=\omega^2\neq 1.
	$
	This, together with \eqref{betasymbol}, contradicts Theorem \ref{th:gencubic}, and proves that equation \eqref{eq:og} has no integer solutions.
\end{proof}

\begin{table}[ht]
\caption{\label{tab:cubshortest} The current shortest cubic equations for which Problem \ref{prob:cub} remains open, all of length $l = 12+\log_2 3 \approx 13.6$.}
	\begin{center}
		\begin{tabular}{ |c|c| }
			\hline
			Equation & Equation \\
			\hline\hline
			$y^2z+yz^2=x^3+x^2+3x-1$ & $2x^3+3xy^2+z^3+z^2+1=0$ \\
			\hline
			$y^2z+yz^2=3x^3+x^2+x-1$ & $(3x-1)y^2 + x z^2 = x^3-2$ \\
			\hline
			$y^2z+yz^2=6x^3+x^2+1$ & \\
			\hline
		\end{tabular}
		
	\end{center}
\end{table}

After equation \eqref{eq:og} is resolved in Theorem \ref{th:og}, the five equations listed in Table \ref{tab:cubshortest} are the only cubic equations of length $l \leq 12+\log_2 3$ for which Problem \ref{prob:cub} remains open. The reader is invited either to find an integer solution to any of these equations or to prove that none exists. 

We remark that the length defined in \eqref{eq:lengthdef} is not the only way to order Diophantine equations. For example, one may also order equations by \emph{size}, $H(P)$, defined in \cite{MR4823864} as
$$
H(P) = \sum_{i=1}^k \bigl(|a_i|\cdot 2^{d_i}\bigr),
$$
where $a_i$ and $d_i$ are the same as in \eqref{eq:lengthdef}. Note that $H(P)$ corresponds to $L(P)$ in \eqref{eq:lengthdef}, but with the product replaced by a sum. If equations are ordered by size $H$, the current smallest cubic equation for which Problem~\ref{prob:cub} remains open is 
$$
y^2z+yz^2=x^3+x^2+3x-1
$$
of size $H=35$, which is the first entry in Table~\ref{tab:cubshortest}. See \cite{grechuk2024systematic} for a continually updated list of the smallest and shortest open equations in various categories. Despite the non-uniqueness of ordering methods, we consider the length \eqref{eq:lengthdef} to be the most natural choice; it approximates the number of symbols needed to write down an equation as a sum of monomials, assuming the power symbol is not used (e.g., writing $y^2z$ as $yyz$) and all coefficients are written in binary.

\section*{Acknowledgments}
We are grateful to the referee for their valuable comments and suggestions, which helped to improve the quality of the paper.

\bibliography{sref} 
\bibliographystyle{plain}
		
\end{document}